\documentclass[12pt]{article}
\usepackage{amssymb,amsmath}
\usepackage{amsthm}
\usepackage{cases}
\usepackage{amsfonts}
\usepackage{graphicx}
\usepackage{cite,color,xcolor}
\usepackage[left=2.7cm,right=2.7cm,top=2.4cm,bottom=2.4cm]{geometry}
\usepackage[colorlinks,citecolor=blue,urlcolor=blue]{hyperref}
\usepackage[utf8]{inputenc}

\newtheorem{theorem}{Theorem}[section]

\newtheorem{lemma}{Lemma}[section]
\newtheorem{proposition}{Proposition}[section]

\usepackage{txfonts}
\newcommand{\bal}{\begin{align}}
\newcommand{\bbal}{\begin{align*}}
\newcommand{\beq}{\begin{equation}}
\newcommand{\eeq}{\end{equation}}
\newcommand{\bca}{\begin{cases}}
\newcommand{\eca}{\end{cases}}

\newcommand{\pa}{\partial}
\newcommand{\fr}{\frac}

\newcommand{\dd}{\mathrm{d}}

\newcommand{\R}{\mathbb{R}}

\newcommand{\les}{\lesssim}

\newcommand{\f}{\left}
\newcommand{\g}{\right}

\linespread{1.1}
\begin{document}
\bibliographystyle{plain}
\title{Non-uniform convergence of solution for the Camassa--Holm equation in the zero-filter limit}

\author{Jinlu Li$^{1}$, Yanghai Yu$^{2,}$\footnote{E-mail: lijinlu@gnnu.edu.cn; yuyanghai214@sina.com(Corresponding author); mathzwp2010@163.com} and Weipeng Zhu$^{3}$\\
\small $^1$ School of Mathematics and Computer Sciences, Gannan Normal University, Ganzhou 341000, China\\
\small $^2$ School of Mathematics and Statistics, Anhui Normal University, Wuhu 241002, China\\
\small $^3$ School of Mathematics and Big Data, Foshan University, Foshan, Guangdong 528000, China}

\date{\today}
\maketitle\noindent{\hrulefill}

{\bf Abstract:} In this short note, we prove that given initial data $u_0\in H^s(\R)$ with $s>\fr32$ and for some $T>0$, the solution of the Camassa-Holm equation does not converges uniformly with respect to the initial data in $L^\infty(0,T;H^s(\R))$ to the inviscid Burgers equation as the filter parameter $\alpha$ tends to zero. This is a supplement to our recent result on the zero-filter limit.

{\bf Keywords:} Camassa-Holm equation; Burgers equation; Non-uniform convergence; Zero-filter limit.

{\bf MSC (2010):} 35Q35.

\vskip0mm\noindent{\hrulefill}

\section{Introduction}

In this paper, we continue to consider the zero-filter limit $\alpha\to0$  for the Camassa--Holm equation in the Sobolev space
\begin{equation}\label{mC}
\begin{cases}
\pa_tm+2m\pa_xu+u\pa_xm=0, \quad (t,x)\in \R^+\times\R,\\
m=(1-\alpha^2\pa^2_x)u,\\
u(0,x)=u_0(x),
\end{cases}
\end{equation}
where  the constant $\alpha>0$ is a filter parameter.  When the filter parameter $\alpha=0$, Eq.\eqref{mC} becomes the Burgers equation
\begin{align}\label{bu}
\begin{cases}
u_t+3u \partial_xu=0,  \quad (t,x)\in \R^+\times\R,\\
u(0,x)=u_0(x).
\end{cases}
\end{align}
The Camassa--Holm equation was firstly proposed in the context of hereditary symmetries studied in \cite{Fokas} and then was derived explicitly as a water wave equation by Camassa--Holm \cite{Camassa}. \eqref{mC} is completely integrable \cite{Camassa,Constantin-P} with a bi-Hamiltonian structure \cite{Constantin-E,Fokas} and infinitely many conservation laws \cite{Camassa,Fokas}. Also, it admits exact peaked
soliton solutions (peakons) of the form $u(x,t)=ce^{-|x-ct|}$ with $c>0,$ which are orbitally stable \cite{Constantin.Strauss}. Another remarkable feature of the Camassa--Holm equation is the wave breaking phenomena: the solution remains bounded while its slope becomes unbounded in finite time \cite{Constantin,Escher2,Escher3}. It is worth mentioning that the peaked solitons present the characteristic for the travelling water waves of greatest height and largest amplitude and arise as solutions to the free-boundary problem for incompressible Euler equations over a flat bed, see Refs. \cite{Constantin-I,Escher4,Escher5} for the details.

We note that the pseudo-differential operator $(1-\alpha^2 \partial_x^2)^{-1}$  with the
Fourier multiplier $(1+\alpha^2 |\xi|^2)^{-1}$ can be defined as follows
\bal\label{g}
\quad\left(1-\alpha^2 \partial_x^2\right)^{-1} f=g*f,\quad \forall \;f \in L^2(\mathbb{R}),
\end{align}
where $g(x):=\frac{1}{2 \alpha} e^{-\frac{|x|}{\alpha}}$, $x \in \mathbb{R}$ and $*$ denotes convolution, then $u=g * m$. Using this identity and applying the pseudo-differential operator $\left(1-\alpha^2 \partial_x^2\right)^{-1}$ to Eq.\eqref{mC}, one can rewrite Eq.\eqref{mC} as a quasi-linear nonlocal evolution equation of hyperbolic type, namely
\begin{align}\label{c}
\begin{cases}
u_t+3u \partial_xu=-\alpha^2\partial^3_x \left(1-\alpha^2 \partial_x^2\right)^{-1}u^2-\frac{\alpha^2}{2}\partial_x \left(1-\alpha^2 \partial_x^2\right)^{-1} (\partial_xu)^2,\\
u(0,x)=u_0(x).
\end{cases}
\end{align}
Formally, as $\alpha\to0$, the solution of the Camassa--Holm equation \eqref{c} converges to the
solution of the following Burgers equation
\begin{align}\label{b}
\begin{cases}
u_t+3u \partial_xu=0, &\quad (t,x)\in \R^+\times\R,\\
u(0,x)=u_0(x).
\end{cases}
\end{align}
The Burgers equation is perhaps the most basic example of a PDE evolution leading to shocks. Further background and motivation for the  Burgers equation may be found in \cite{miao2009,Alibaud,Dong,Karch,Linares,Molinet} and references therein.

Gui-Liu \cite{GL} proved that the solutions of the  Camassa--Holm equation with additional dissipative term $\nu\Lambda^{\gamma}u$ does converge, at least locally, to the
one of the dissipative Burgers equation as the filter parameter $\alpha$ tends to zero in the lower regularity Sobolev spaces.
Recently,  in \cite{aml}, we considered the zero-filter limit for the Camassa–Holm equation without dissipative term in Sobolev spaces, and proved that the solution of \eqref{c} converges to the solution of the inviscid Burgers equation \eqref{b} in the topology of Sobolev spaces. Precisely speaking,
\begin{theorem}[\cite{aml}]\label{th1} Let $s>\frac32$ and $\alpha \in(0,1)$. Assume that the initial data $u_0\in H^s(\mathbb{R})$. Let $\mathbf{S}_{t}^{\mathbf{\alpha}}(u_0)$ and $\mathbf{S}_{t}^{0}(u_0)$ be the smooth solutions of \eqref{c} and \eqref{b} with the initial data $u_0$ respectively. Then there exists a time $T=T(\|u_0\|_{H^s})>0$ such that  $\mathbf{S}_{t}^{\mathbf{\alpha}}(u_0),\mathbf{S}_{t}^{0}(u_0)\in \mathcal{C}([0,T];H^s)$ and
$$
\lim_{\alpha\to 0}\left\|\mathbf{S}_{t}^{\mathbf{\alpha}}(u_0)-\mathbf{S}_{t}^{0}(u_0)\right\|_{L^\infty_TH^{s}}=0.
$$
\end{theorem}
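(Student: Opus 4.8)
The plan is to estimate the difference of the two flows by energy methods and then lift the resulting convergence to the endpoint regularity $H^s$ by a Bona--Smith mollification argument. Write $u^\alpha=\mathbf{S}^\alpha_t(u_0)$ and $v=\mathbf{S}^0_t(u_0)$, and record the two symbol bounds that make every estimate uniform in $\alpha$: the Fourier multiplier of $\alpha^2\pa_x^3(1-\alpha^2\pa_x^2)^{-1}$, namely $m_\alpha(\xi)=\fr{\alpha^2\xi^3}{1+\alpha^2\xi^2}$, satisfies both $|m_\alpha(\xi)|\le|\xi|$ and $|m_\alpha(\xi)|\le\alpha^2|\xi|^3$, while $\fr{\alpha^2}{1+\alpha^2\xi^2}\le\alpha^2$. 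The first inequality says the nonlocal terms in \eqref{c} are never worse than a first-order derivative, so the $H^s$ energy estimate for \eqref{c} closes with constants independent of $\alpha$; this yields a common lifespan $T=T(\|u_0\|_{H^s})$ and the uniform bound $\sup_{\alpha\in(0,1)}\|u^\alpha\|_{L^\infty_TH^s}\le C(\|u_0\|_{H^s})$ on the same interval carrying $v$. The second and third inequalities are what extract $\alpha$-smallness once spare derivatives are available, and will only be used on mollified data.

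First I would subtract the equations. Writing the Burgers nonlinearity as $3u^\alpha\pa_xu^\alpha-3v\pa_xv=3u^\alpha\pa_x(u^\alpha-v)+3(u^\alpha-v)\pa_xv$, the difference $w=u^\alpha-v$ solves the linear transport equation $\pa_tw+3u^\alpha\pa_xw=-3w\pa_xv+R^\alpha$ with $w|_{t=0}=0$, where $R^\alpha$ is the nonlocal right-hand side of \eqref{c} evaluated at $u^\alpha$. A standard energy estimate one regularity below, using $H^{s-1}\hookrightarrow L^\infty$ for $s>\fr32$ and integration by parts in the transport term, gives $\fr{d}{dt}\|w\|_{H^{s-1}}\les(\|u^\alpha\|_{H^s}+\|v\|_{H^s})\|w\|_{H^{s-1}}+\|R^\alpha\|_{H^{s-1}}$, so that Gronwall and $w|_{t=0}=0$ reduce matters to the source. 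Because the source costs one derivative, however, this route only controls $w$ in norms strictly below $H^s$; the endpoint is the real difficulty and forces the mollification scheme below.

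To reach $H^s$, mollify the data, $u_0^\de=\rho_\de*u_0$, and split
\beq\label{bs}
\|u^\alpha-v\|_{H^s}\le\|u^\alpha-u^{\alpha,\de}\|_{H^s}+\|u^{\alpha,\de}-v^\de\|_{H^s}+\|v^\de-v\|_{H^s},
\eeq
with $u^{\alpha,\de}=\mathbf{S}^\alpha_t(u_0^\de)$ and $v^\de=\mathbf{S}^0_t(u_0^\de)$. For the middle term both data are smooth, so the uniform (in $\alpha$) propagation of $H^{s+3}$ regularity gives $\|u^{\alpha,\de}\|_{L^\infty_TH^{s+3}}\les\de^{-3}$, and then the bounds $|m_\alpha(\xi)|\le\alpha^2|\xi|^3$ and $\fr{\alpha^2}{1+\alpha^2\xi^2}\le\alpha^2$ yield $\|R^{\alpha,\de}\|_{L^\infty_TH^s}\les\alpha^2\de^{-3}\to0$ as $\alpha\to0$ for each fixed $\de$. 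With a genuine $H^s$ source now at our disposal, the direct $H^s$ energy estimate for $u^{\alpha,\de}-v^\de$ closes by Gronwall and shows the middle term tends to $0$. For the two outer terms I would invoke the Bona--Smith estimate for a single flow: the lower-order stability $\|u^\alpha-u^{\alpha,\de}\|_{H^{s-1}}\les\|u_0-u_0^\de\|_{H^{s-1}}=o(\de)$ combines with $\|u^{\alpha,\de}\|_{H^{s+1}}\les\de^{-1}$ so that the only derivative-losing contribution in the $H^s$ energy estimate for $u^\alpha-u^{\alpha,\de}$, which is of size $\|u^{\alpha,\de}\|_{H^{s+1}}\|u^\alpha-u^{\alpha,\de}\|_{H^{s-1}}\les\de^{-1}o(\de)=o(1)$, is negligible; together with $\|u_0-u_0^\de\|_{H^s}\to0$ this forces $\|u^\alpha-u^{\alpha,\de}\|_{L^\infty_TH^s}\to0$ uniformly in $\alpha$, and identically for $v^\de-v$. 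Choosing $\de$ small to make the outer terms uniformly small and then letting $\alpha\to0$ in the middle term finishes the proof.

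The step I expect to be the main obstacle is the endpoint regularity itself: the nonlocal source $R^\alpha$ inevitably costs one derivative (its symbol $m_\alpha$ is only $O(|\xi|)$, not $o(1)$, uniformly in $\alpha$), so it cannot be closed directly in $H^s$ for merely $H^s$ data. The device that defeats this is the sharp mollification rate $\|u_0-u_0^\de\|_{H^{s-1}}=o(\de)$ (valid because $\hat\rho'(0)=0$), which must be paired against the $\de^{-1}$ growth of $\|u^{\alpha,\de}\|_{H^{s+1}}$; keeping this pairing, and indeed every constant in \eqref{bs}, independent of $\alpha$ is exactly what the uniform symbol bound $|m_\alpha(\xi)|\le|\xi|$ provides.
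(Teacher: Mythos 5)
First, note that this paper does not actually prove Theorem \ref{th1}: it is quoted from \cite{aml}, and the present paper only imports the uniform bounds \eqref{m1}--\eqref{m2} from there. Your Bona--Smith architecture (uniform lifespan, three-term splitting with mollified data $u_0^\de$, middle term $O(\alpha^2\de^{-3})$ for fixed $\de$, outer terms via the pairing $\|u_0-u_0^\de\|_{H^{s-1}}=o(\de)$ against $\|u^{\alpha,\de}\|_{H^{s+1}}\les \de^{-1}$) is of the same mollification type as the cited argument, and the $\epsilon/3$ endgame is sound. The gap is in the repeated claim that the symbol bound $|m_\alpha(\xi)|\le|\xi|$ by itself makes the $H^s$ (and $H^{s-1}$) energy estimates ``close with constants independent of $\alpha$.'' It does not: that bound only yields $\|\alpha^2\pa_x^3(1-\alpha^2\pa_x^2)^{-1}u^2\|_{H^s}\le\|u^2\|_{H^{s+1}}$, and the tame product estimate unavoidably places $s+1$ derivatives on one factor, i.e.\ the source costs one derivative on the \emph{rough} solution. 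A generic first-order quadratic source cannot be closed at $H^s$ for $H^s$ data; and while $\pa_x(1-\alpha^2\pa_x^2)^{-1}$ does gain a derivative for fixed $\alpha$, its gaining constant degenerates, since $\sup_\xi |\xi|(1+\alpha^2\xi^2)^{-1}=(2\alpha)^{-1}$. The missing device is structural: use $\alpha^2\pa_x^2(1-\alpha^2\pa_x^2)^{-1}=-1+(1-\alpha^2\pa_x^2)^{-1}$ to rewrite \eqref{c} as $u_t+u\pa_xu=-Q_\alpha\left(u^2+\tfrac{\alpha^2}{2}(\pa_xu)^2\right)$ with $Q_\alpha:=\pa_x(1-\alpha^2\pa_x^2)^{-1}$, so that the local part $-\pa_x u^2$ merges into the transport structure (handled by Kato--Ponce commutators), and then control the $Q_\alpha$ part through antisymmetry plus the $\alpha$-uniform Calder\'on-type commutator bound $\|[Q_\alpha,u]\|_{L^2\to L^2}\le \|\pa_xu\|_{L^\infty}$, which follows from the kernel $\pa_xg_\alpha$ with $g_\alpha(x)=\tfrac{1}{2\alpha}e^{-|x|/\alpha}$ via $\|\,|x|\,\pa_xg_\alpha\|_{L^1}=1$. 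Without this (or an equivalent mechanism), neither the uniform bound \eqref{m1} you invoke, nor your $H^{s-1}$ stability estimate, nor the outer-term $H^s$ estimates are actually established.

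The overclaim bites concretely in your outer-term step: you assert that ``the only derivative-losing contribution in the $H^s$ energy estimate for $u^\alpha-u^{\alpha,\de}$'' is $\|u^{\alpha,\de}\|_{H^{s+1}}\|u^\alpha-u^{\alpha,\de}\|_{H^{s-1}}$, but this accounts only for the Burgers part. The difference of the nonlocal terms, e.g.\ $\alpha^2\pa_x^3(1-\alpha^2\pa_x^2)^{-1}\left((u^\alpha)^2-(u^{\alpha,\de})^2\right)$, estimated with $|m_\alpha(\xi)|\le|\xi|$, costs $\|(u^\alpha+u^{\alpha,\de})(u^\alpha-u^{\alpha,\de})\|_{H^{s+1}}$, which involves $\|u^\alpha\|_{H^{s+1}}$ --- not controlled for data merely in $H^s$ (the same issue already appears in your $H^{s-1}$ transport estimate for $w$, where the crude bound leaves a source that is bounded but not $\les\|w\|_{H^{s-1}}$, so Gronwall does not give $\|u^\alpha-u^{\alpha,\de}\|_{H^{s-1}}\les\|u_0-u_0^\de\|_{H^{s-1}}$ as claimed). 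After the decomposition-plus-commutator repair, the derivative-losing piece is indeed only the one you name, and your scheme then delivers the theorem; but as written, the steps that must be uniform in $\alpha$ are asserted rather than proved, and the assertion is false for the operators taken at face value.
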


An interesting problem appears:
For any $u_0\in U_R$, in the zero-filter limit $\alpha\to0$, whether or not the $H^{s}$-convergence
\begin{align*}
\mathbf{S}_{t}^\alpha(u_0)\rightarrow\mathbf{S}_{t}^0(u_0)\quad\text{in} \quad L_T^\infty H^s
\end{align*}
can be established {\it uniformly with respect to the initial data $u_0$?}

For any $R>0$, from now on, we denote any bounded subset $U_R\subset H^s(\mathbb{R})$ by
$$U_R:=\left\{\phi\in H^s(\mathbb{R}): \|\phi\|_{H^s(\mathbb{R})}\leq R\right\}.$$
In this paper, we shall answer the above question and state our main result as follows.
\begin{theorem}\label{th2} Let $\alpha\in (0,1)$ and $s>\fr32$. For any $u_0\in U_R$, let $\mathbf{S}_{t}^{\alpha}(u_0)$ and $\mathbf{S}_{t}^{0}(u_0)$ be the solutions of \eqref{c} and \eqref{b} with the same initial data $u_0$, respectively. Then a family of solutions $\f\{\mathbf{S}_{t}^{\alpha}(u_0)\g\}_{\alpha>0}$ to \eqref{c}
\begin{equation*}
\mathbf{S}_t^\alpha:\begin{cases}
U_R \rightarrow \mathcal{C}([0, T] ; H^s),\\
u_0\mapsto \mathbf{S}_t^\alpha(u_0),
\end{cases}
\end{equation*}
do not converge strongly in a uniform way with respect to initial data to the solution $\mathbf{S}_{t}^{0}(u_0)$ of \eqref{b} in $H^s$.
More precisely, there exists a sequence initial data $\{u^n_0\}_{n=1}^{\infty}\in U_R$ such that for a short time $T_0\leq T$
$$
\liminf_{\alpha_n\to 0}\left\|\mathbf{S}_{t}^{\alpha_n}(u^n_0)-\mathbf{S}_{t}^{0}(u^n_0)\right\|_{L^\infty_{T_0}H^s}\geq \eta_0,
$$
with some positive constant $\eta_0$. 
\end{theorem}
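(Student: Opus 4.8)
The plan is to construct an explicit family of data placed exactly at the filter cut-off, where the Camassa--Holm remainder in \eqref{c} produces an order-one $H^s$ forcing invisible to \eqref{b}. Concretely I would set $\alpha_n=1/n$ and
$$u^n_0(x)=\fr1n\phi(x)+n^{-s}\phi(x)\cos(nx),$$
with $\phi$ a fixed nonzero Schwartz bump normalised so that $\|u^n_0\|_{H^s}\le R$. Denoting by $R_\alpha(v)$ the whole right-hand side of \eqref{c}, the leading operator $-\alpha_n^2\pa_x^3(1-\alpha_n^2\pa_x^2)^{-1}$ has Fourier multiplier $i\alpha_n^2\xi^3/(1+\alpha_n^2\xi^2)$, which equals $in/2$ at the resonant frequency $\xi=n=1/\alpha_n$. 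Since $(u^n_0)^2$ contains the low--high cross term $2n^{-1-s}\phi^2\cos(nx)$ at frequency $n$, applying this multiplier yields a frequency-$n$ function of amplitude $n^{-s}$, hence of $H^s$-norm comparable to $\|\phi^2\|_{L^2}=:c_0>0$, uniformly in $n$. I would then verify that every remaining contribution — the self-interaction at frequency $2n$, the low-frequency output, and the entire $(\pa_x v)^2$ term — carries a surplus power $n^{1-s}\ri0$ or $n^{-1}\ri0$, so that $\|R_{\alpha_n}(u^n_0)\|_{H^s}\ri c_0$. Thus \eqref{c} is forced at order one in $H^s$ while \eqref{b} is not.

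Writing $u^\alpha=\mathbf{S}^{\alpha_n}_t(u^n_0)$, $u^0=\mathbf{S}^0_t(u^n_0)$ and $d=u^\alpha-u^0$, subtracting \eqref{b} from \eqref{c} gives the forced transport equation
$$\pa_t d+3u^\alpha\pa_x d+3(\pa_x u^0)\,d=R_{\alpha_n}(u^\alpha),\qquad d|_{t=0}=0.$$
Because $\|u^n_0\|_{H^s}\le R$, the well-posedness underlying Theorem \ref{th1} furnishes a common time $T_0\le T$ and bounds $\|u^\alpha\|_{L^\infty_{T_0}H^s}+\|u^0\|_{L^\infty_{T_0}H^s}\les R$ independent of $n$. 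The structural gain from the small background is that the frequency-$n$ part of $u^\alpha\pa_x u^\alpha$ is $(\fr1n\phi)(-n^{1-s}\phi\sin nx)+\cdots=O_{H^s}(1)$, so $\|\pa_t u^\alpha\|_{H^s}\les1$ and therefore $\|u^\alpha(t)-u^n_0\|_{H^s}\les t$ uniformly in $n$ (and likewise for $u^0$); moreover the transport speed $3u^\alpha$ is $O(1/n)$ in $L^\infty$ and $\|\pa_x u^0\|_{L^\infty}\les n^{1-s}\ri0$.

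For the lower bound I would integrate along the Camassa--Holm characteristics $\dot y=3u^\alpha(t,y)$, $y(0,x)=x$, which turns the equation for $d$ into the pointwise ODE $\frac{d}{dt}\,d(t,y)=-3(\pa_x u^0)\,d+R_{\alpha_n}(u^\alpha)$ evaluated along $y$, so that
$$d(t,y(t,x))=\int_0^t \exp\!\Big(-3\!\int_\tau^t(\pa_x u^0)(\sigma,y(\sigma,x))\,\dd\sigma\Big) R_{\alpha_n}(u^\alpha)(\tau,y(\tau,x))\,\dd\tau.$$
I claim the right-hand side equals $t\,R_{\alpha_n}(u^n_0)(x)+O_{H^s}(t^2)$: the exponential is $1+o(1)$ since $\|\pa_x u^0\|_{L^\infty}\ri0$; replacing $R_{\alpha_n}(u^\alpha(\tau))$ by $R_{\alpha_n}(u^n_0)$ costs $\les\|u^\alpha(\tau)-u^n_0\|_{H^s}\les\tau$; and replacing $y(\tau,x)$ by $x$ costs $\les\|R_{\alpha_n}(u^n_0)\|_{H^{s+1}}\,\|y(\tau)-\mathrm{id}\|_{L^\infty}\les n\cdot(\tau/n)=\tau$, the derivative factor $n$ being cancelled by the $O(1/n)$ transport speed. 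Since composition with the uniformly bi-Lipschitz flow preserves $H^s$ up to constants independent of $n$, this gives
$$\|d(t)\|_{H^s}\ge c\,t\,\|R_{\alpha_n}(u^n_0)\|_{H^s}-Ct^2\ge \tfrac{c}{2}c_0\,t$$
for $t$ small; fixing such a $T_0$ yields $\|d\|_{L^\infty_{T_0}H^s}\ge\eta_0:=\tfrac{c}{2}c_0T_0$ for all $n$, and the conclusion follows on letting $\alpha_n=1/n\ri0$.

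The main obstacle is the interplay between the derivative loss in the nonlocal and transport terms and the high frequency $n$ of the data: done crudely, every $H^s$ estimate of the error produces a spurious factor $n$ (from $\pa_x$ hitting the frequency-$n$ packet), which would force the time window to shrink like $1/n$ and defeat a uniform $T_0$. The entire construction — locking $\alpha_n$ to the cut-off $1/n$ and suppressing the background to amplitude $1/n$ — is engineered precisely so that each such $n$ is balanced by a compensating $1/n$ from the slow transport and weak self-interaction, leaving genuine $O(t^2)$ errors. The technically delicate point is the uniform-in-$\alpha$ Lipschitz bound $\|R_\alpha(u^\alpha)-R_\alpha(u^n_0)\|_{H^s}\les\|u^\alpha-u^n_0\|_{H^s}$ for the quadratic nonlocal remainder, whose multiplier is of size $n$ at the resonant frequency; I would establish it by a Littlewood--Paley decomposition around $\xi\sim n$, where that multiplier is $O(1)$ after the $H^s$ weight, together with the composition estimates for the flow.
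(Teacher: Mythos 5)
Your construction and mechanism are essentially the paper's own (with $2^n$ replaced by $n$): low-frequency bump of amplitude $\alpha_n$ plus a packet of amplitude $n^{-s}$ at the cut-off frequency $n=\alpha_n^{-1}$, the resonance of the nonlocal multiplier at $|\xi|\sim\alpha_n^{-1}$ producing an order-one $H^s$ discrepancy between the first-order Taylor coefficients of the two flows, and errors of size $O(t^2)$. However, your implementation via characteristics has a genuine gap at precisely the step you flag as delicate. The uniform-in-$\alpha$ Lipschitz bound $\|R_\alpha(u)-R_\alpha(v)\|_{H^s}\lesssim\|u-v\|_{H^s}$ is \emph{false}: the symbol of $-\alpha^2\pa_x^3\left(1-\alpha^2\pa_x^2\right)^{-1}$ is $i\alpha^2\xi^3/(1+\alpha^2\xi^2)$, of size comparable to $|\xi|$ for all $|\xi|\gtrsim\alpha^{-1}$, so $R_\alpha$ loses a full derivative uniformly in $\alpha$. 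Your proposed Littlewood--Paley repair ``around $\xi\sim n$'' does not close, because $\mathbf{S}^{\alpha_n}_\tau(u^n_0)-u^n_0$ is not frequency-localized: the quadratic nonlinearity instantly populates $\xi\sim 0,\,2n,\,3n,\dots$, and at $\xi\sim 2n$ the multiplier is still of size $n$, so you cannot trade it for an $O(1)$ factor on the whole difference. The same derivative loss infects your composition step: $\|f\circ y-f\|_{H^s}\lesssim\|f\|_{H^{s+1}}\|y-\mathrm{id}\|_{L^\infty}$ is not a valid $H^s$ inequality — when $s$ derivatives fall on the displacement you pick up terms such as $\|\pa_xf\|_{L^\infty}\|y-\mathrm{id}\|_{H^s}$ plus nontrivial $H^s$-composition estimates for a flow that is only Lipschitz-close to the identity, none of which follow from $\|y-\mathrm{id}\|_{L^\infty}\lesssim\tau/n$ alone. (A smaller point: take $\phi$ with compactly supported Fourier transform, as in the paper, rather than a generic Schwartz bump, so that the claim ``only the low--high cross term survives'' is exact rather than up to tails.)

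The correct repair — and the paper's route (Proposition \ref{pro1}) — replaces the false Lipschitz claim by two-tier norm bookkeeping, which also makes the entire characteristics detour unnecessary. Using the $\alpha$-uniform persistence bounds \eqref{m1}--\eqref{m2}, one first shows $\|u(\tau)-u_0\|_{H^{s-1}}\lesssim\tau\left(\|u_0\|_{H^{s-1}}+\alpha\right)$ and $\|u(\tau)-u_0\|_{H^{s+1}}\lesssim\tau\left(\|u_0\|_{H^{s-1}}\|u_0\|_{H^{s+2}}+\|u_0\|_{H^{s+1}}\right)$; for your data these are $\tau/n$ and $\tau n$ respectively. The derivative lost to the multiplier is then absorbed by measuring the quadratic difference in $H^{s+1}$ and splitting low--high via Lemma \ref{le1}:
\begin{align*}
\left\|u^2-u_0^2\right\|_{H^{s+1}}\lesssim\|u-u_0\|_{H^{s-1}}\|u_0\|_{H^{s+1}}+\|u_0\|_{H^{s-1}}\|u-u_0\|_{H^{s+1}}\lesssim\fr{\tau}{n}\cdot n+\fr1n\cdot\tau n\lesssim\tau,
\end{align*}
so that $\mathbf{S}^{\alpha_n}_t(u^n_0)=u^n_0+t\mathbf{E}(\alpha_n,u^n_0)+O_{H^s}(t^2)$ holds uniformly in $n$ by a direct fundamental-theorem-of-calculus argument, with the same expansion for the Burgers flow at $\alpha=0$. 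The theorem then follows from the triangle inequality and the resonance lower bound (Lemma \ref{le3}), exactly your intended endgame but without any flow-map composition estimates. In short: right data, right resonance computation, but the central uniform estimate is obtained by balancing $H^{s-1}$ against $H^{s+1}$, not by an $H^s$ Lipschitz property that the nonlocal operator does not have.
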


{\bf Notation}\; Throughout this paper, we will denote by $C$ any positive constant independent of the parameter $\alpha$, which may change from line to line. The symbol $\mathrm{A}\lesssim (\gtrsim)\mathrm{B}$ means that there is a uniform positive ``harmless" constant $C$ independent of $\mathrm{A}$ and $\mathrm{B}$ such that $\mathrm{A}\leq(\geq) C\mathrm{B}$, and we sometimes use the notation $\mathrm{A}\approx \mathrm{B}$ means that $\mathrm{A}\lesssim \mathrm{B}$ and $\mathrm{B}\lesssim \mathrm{A}$.
Given a Banach space $X$, we denote its norm by $\|\cdot\|_{X}$. For $I\subset\R$, we denote by $\mathcal{C}(I;X)$ the set of continuous functions on $I$ with values in $X$. Sometimes we will denote $L^p(0,T;X)$ by $L_T^pX$.
For $s\in\R$, the nonhomogeneous Sobolev space is defined by
$\|f\|^2_{H^s}=\int_{\R}(1+|\xi|^2)^s|\widehat{f}(\xi)|^2\dd \xi.$
We recall the classical result for later proof.
\begin{lemma}[\cite{B}]\label{le1}
For $s>0$, $H^s(\R)\cap L^\infty(\R)$ is an algebra.
Moreover, we have for any $u,v \in H^s(\R)\cap L^\infty(\R)$
\begin{align*}
&\|uv\|_{H^s(\R)}\leq C\big(\|u\|_{H^s(\R)}\|v\|_{L^\infty(\R)}+\|v\|_{H^s(\R)}\|u\|_{L^\infty(\R)}\big).
\end{align*}
In particular, for $s>\frac12$, due to the fact $H^s(\R)\hookrightarrow L^\infty(\R)$, then we have
\begin{align*}
&\|uv\|_{H^s(\R)}\leq C\|u\|_{H^s(\R)}\|v\|_{H^s(\R)}.
\end{align*}
\end{lemma}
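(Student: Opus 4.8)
The plan is to exhibit a single sequence of pairs $(u_0^n,\alpha_n)$ with $\alpha_n\to0$ and $\{u_0^n\}\subset U_R$ for which the nonlocal correction in \eqref{c} forces an $O(1)$ phase rotation into the high-frequency part of the Camassa--Holm flow that is entirely absent from the Burgers flow. Theorem \ref{th1} furnishes a common existence time $T=T(R)$ and uniform bounds $\|\mathbf{S}_t^{\alpha}(u_0^n)\|_{H^s}+\|\mathbf{S}_t^{0}(u_0^n)\|_{H^s}\lesssim R$ on $[0,T]$ for every $u_0^n\in U_R$, so it suffices to produce a fixed $T_0\le T$ at which the two flows are separated by a fixed amount in $H^s$, uniformly in $n$.

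For the initial data I take
\[u_0^n(x)=\psi(x)+\lambda_n^{-s}\phi(x)\cos(\lambda_nx),\qquad\lambda_n\to\infty,\]
where $\phi\in C_c^\infty(\R)$ is a fixed bump and $\psi\in C_c^\infty(\R)$ is a fixed profile that equals a constant $c_0>0$ on the support of $\phi$, and I couple the filter parameter to the frequency by requiring $\alpha_n\lambda_n\to0$ and $\alpha_n^2\lambda_n^3\to\kappa\in(0,\infty)$ (e.g.\ $\alpha_n=\lambda_n^{-3/2}$, $\kappa=1$). A frequency-localisation estimate gives $\|\lambda_n^{-s}\phi\cos(\lambda_n\cdot)\|_{H^s}\approx1$, so $\{u_0^n\}\subset U_R$ for a suitable $R$. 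The mechanism lives in the symbol $\frac{-i\alpha^2\xi^3}{1+\alpha^2\xi^2}$ of $\alpha^2\partial_x^3(1-\alpha^2\partial_x^2)^{-1}$: at $\xi=\lambda_n$ it equals $-ic_n$ with $c_n=\frac{\alpha_n^2\lambda_n^3}{1+\alpha_n^2\lambda_n^2}\to\kappa$. Hence the frequency-$\lambda_n$ component of the first correction term applied to the cross product $2\psi\cdot\lambda_n^{-s}\phi\cos(\lambda_nx)$ is, to leading order, the purely imaginary multiple $2ic_0c_n$ of the high-frequency wave, a phase forcing of size $\Theta(1)$ in $H^s$ (estimated through Lemma \ref{le1}), whereas the self-interaction (supported near $2\lambda_n$) and the whole second correction term are $o(1)$ in $H^s$ under the scaling $\alpha_n\lambda_n\to0$.

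This points to comparing each flow with an explicit modulated ansatz. Writing the carrier in complex form $V_n=\lambda_n^{-s}\phi\,e^{i\lambda_nx}$, the Burgers flow only transports it whereas the Camassa--Holm flow additionally rotates its phase, so I set $u_{\mathrm{app}}^{0}=\psi+\mathrm{Re}\,[V_n(\cdot-3c_0t)]$ and $u_{\mathrm{app}}^{\alpha}=\psi+\mathrm{Re}\,[e^{2ic_0c_nt}V_n(\cdot-3c_0t)]$, with the common background $\psi$. Since translation preserves the $H^s$ norm,
\[\|u_{\mathrm{app}}^{\alpha}-u_{\mathrm{app}}^{0}\|_{H^s}\approx|e^{2ic_0c_nt}-1|\,\|V_n\|_{H^s}\approx|e^{2ic_0c_nt}-1|,\]
and choosing $c_0,\kappa,T_0$ so that $2c_0\kappa T_0\in(0,\pi)$, with $T_0$ small enough that the packet remains inside the region where $\psi\equiv c_0$, makes the right-hand side bounded below by a fixed $2\eta_0>0$ at $t=T_0$.

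The crux is the error estimate: I must show
\[\|\mathbf{S}_t^{0}(u_0^n)-u_{\mathrm{app}}^{0}\|_{L^\infty_{T_0}H^s}+\|\mathbf{S}_t^{\alpha_n}(u_0^n)-u_{\mathrm{app}}^{\alpha}\|_{L^\infty_{T_0}H^s}\to0\quad(n\to\infty),\]
after which $\|\mathbf{S}_t^{\alpha_n}(u_0^n)-\mathbf{S}_t^{0}(u_0^n)\|_{L^\infty_{T_0}H^s}\ge2\eta_0-o(1)\ge\eta_0$ follows by the triangle inequality. The difficulty is that $\|u_0^n\|_{H^{s+1}}\approx\lambda_n\to\infty$, so any step that loses a derivative diverges. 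The plan is, first, to check that the residuals produced when $u_{\mathrm{app}}^{0}$ and $u_{\mathrm{app}}^{\alpha}$ are substituted into \eqref{b} and \eqref{c} --- the dropped self-interaction $3v\partial_xv$, the error from freezing the symbol at $\xi=\lambda_n$ and the background at $\psi\equiv c_0$, and the $O(\alpha_n^2)$ action of the correction on low frequencies --- are all $o(1)$ in $H^s$ under the scaling, using Lemma \ref{le1} together with the frequency support of each factor; and, second, to close a transport-type $H^s$ energy estimate for the difference between the true and approximate solutions in which the order-$\lambda_n$ contributions enter only through the skew-symmetric convection operator and are cancelled by the usual commutator argument, so that no derivative is lost and the bound is uniform in $n$.
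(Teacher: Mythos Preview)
Your proposal does not address the stated lemma at all. Lemma~\ref{le1} is the classical product/algebra estimate for $H^s\cap L^\infty$; the paper does not prove it but simply cites it from \cite{B}. What you have written is instead a strategy for proving Theorem~\ref{th2}, the non-uniform zero-filter limit. There is therefore nothing to compare: the statement in question requires no proof beyond a citation, and your text is aimed at an entirely different result.

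If your intent was Theorem~\ref{th2}, then your route is quite different from the paper's and carries a real risk. The paper avoids any long-time approximate-solution construction: it uses a first-order-in-$t$ Taylor expansion (Proposition~\ref{pro1}) to write $\mathbf{S}_t^\alpha(u_0)=u_0+t\mathbf{E}(\alpha,u_0)+O(t^2)$ with an explicit remainder $\mathbf{F}(\alpha,u_0)$, chooses $u_0^n=f_n+g_n$ with $f_n$ a high-frequency packet at $\lambda_n=2^n$ and $g_n=2^{-n}\phi$, and couples $\alpha_n=2^{-n}$ so that $\alpha_n\lambda_n\approx1$. The separation then comes directly from $\|\alpha_n^2\partial_x^2(1-\alpha_n^2\partial_x^2)^{-1}(g_n\partial_xf_n)\|_{H^s}\gtrsim1$ at $t$ small, and the $O(t^2)$ error is controlled because $\mathbf{F}(\alpha_n,u_0^n)\lesssim1$ under this scaling. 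Your modulated-ansatz approach instead requires closing an $H^s$ energy estimate for the difference between the true and approximate solutions over a \emph{fixed} time interval $[0,T_0]$, with data whose $H^{s+1}$ norm diverges; the ``usual commutator argument'' you invoke does not by itself prevent the coefficient in the Gr\"onwall inequality from blowing up like $\|\partial_x u\|_{L^\infty}\approx\lambda_n^{1-s}\cdot\lambda_n^{?}$ unless you track very carefully which factor carries the high frequency. Moreover, your scaling $\alpha_n^2\lambda_n^3\to\kappa$ with $\alpha_n\lambda_n\to0$ makes the symbol $\alpha_n^2\xi^3/(1+\alpha_n^2\xi^2)$ at $\xi=\lambda_n$ tend to $\kappa$, but the paper's simpler choice $\alpha_n\lambda_n\approx1$ already achieves an $O(1)$ effect with a much shorter argument.
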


\section{Proof of Theorem \ref{th2}}\label{sec3}

For fixed $\alpha>0$, by the classical local well-posedness result, we known that there exists a $T_\alpha=T(\|u_0\|_{H^s},s,\alpha)>0$ such that the Camassa-Holm has a unique solution $\mathbf{S}_{t}^{\mathbf{\alpha}}(u_0)\in\mathcal{C}([0,T_\alpha];H^s)$. Furthermore, we can obtain that $\exists\; T=T(\|u_0\|_{H^s},s)>0$ such that $T\leq T_{\alpha}$ and there exists $C_1>0$ independent of $\alpha$ such that
\begin{align}\label{m1}
\|\mathbf{S}_{t}^{\mathbf{\alpha}}(u_0)\|_{L_T^{\infty} H^s} \leq C_1\left\|u_0\right\|_{H^s}, \quad \forall \alpha \in[0,1).
\end{align}
Moreover, if $u_0 \in H^\gamma \cap H^s$ for some $\gamma\geq s-1$, then there exists $C_2(\left\|u_0\right\|_{H^s})>0$ independent of $\alpha$ such that
\begin{align}\label{m2}
\|\mathbf{S}_{t}^{\mathbf{\alpha}}(u_0)\|_{L_T^{\infty} H^\gamma} \leq C_2(\left\|u_0\right\|_{H^s})\left\|u_0\right\|_{H^\gamma} .
\end{align}
For more details on the proof of \eqref{m1} and \eqref{m2}, we can refer to see  \cite{aml}.

Next, we establish the following proposition will play a crucial role in the proof of Theorem \ref{th2}.
\begin{proposition}\label{pro1}
Let $\alpha\in[0,1)$. Assume that $s>\frac{3}{2}$ and $\|u_0\|_{H^s}\approx 1$. Let $\mathbf{S}_{t}^{\mathbf{\alpha}}(u_0)$ and $\mathbf{S}_{t}^{0}(u_0)$ be the smooth solutions of \eqref{c} and \eqref{b} with the same initial data $u_0$, respectively. Then we have
\bbal
\f\|\mathbf{S}^\alpha_{t}(u_0)-u_0-t\mathbf{E}(\alpha,u_0)\g\|_{H^{s}}\leq Ct^{2}\mathbf{F}(\alpha,u_0),
\end{align*}
where we denote
\bbal
&\mathbf{E}(\alpha,u_0):=-3u_0\pa_xu_0-\alpha^2\partial^3_x \left(1-\alpha^2 \partial_x^2\right)^{-1}u_0^2-\frac{\alpha^2}{2}\partial_x \left(1-\alpha^2 \partial_x^2\right)^{-1} (\partial_xu_0)^2,\\
&\mathbf{F}(\alpha,u_0):=\alpha\|u_0\|_{H^{s+1}}\f(\alpha\|u_0\|_{H^{s+1}}+\|u_0\|_{H^{s-1}}\|u_0\|_{H^{s+1}}\g)+ (\alpha+\|u_0\|_{H^{s-1}})\f(\|u_0\|_{H^{s+1}}+\|u_0\|_{H^{s-1}}\|u_0\|_{H^{s+2}}\g).
\end{align*}
\end{proposition}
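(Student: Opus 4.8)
The plan is to exploit that $\mathbf{E}(\alpha,u_0)$ is exactly the right-hand side of \eqref{c} evaluated at the initial datum. Writing
\[
P(u):=-3u\pa_xu-\alpha^2\partial^3_x\left(1-\alpha^2\partial_x^2\right)^{-1}u^2-\frac{\alpha^2}{2}\partial_x\left(1-\alpha^2\partial_x^2\right)^{-1}(\partial_xu)^2
\]
and $u^\alpha(t):=\mathbf{S}^\alpha_t(u_0)$, we have $\mathbf{E}(\alpha,u_0)=P(u_0)$ and $\pa_tu^\alpha=P(u^\alpha)$, so the quantity to be bounded is the first-order Taylor remainder of the flow. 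The fundamental theorem of calculus gives
\[
\mathbf{S}^\alpha_t(u_0)-u_0-t\mathbf{E}(\alpha,u_0)=\int_0^t\big(P(u^\alpha(\tau))-P(u_0)\big)\,\dd\tau,
\]
whence $\|\mathbf{S}^\alpha_t(u_0)-u_0-t\mathbf{E}(\alpha,u_0)\|_{H^s}\le\int_0^t\|P(u^\alpha(\tau))-P(u_0)\|_{H^s}\,\dd\tau$. The whole problem therefore reduces to proving $\|P(u^\alpha(\tau))-P(u_0)\|_{H^s}\lesssim\tau\,\mathbf{F}(\alpha,u_0)$ for $\tau\in[0,T]$, after which integration in $\tau$ produces the factor $t^2$.

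Two ingredients drive this estimate. First, every term of $P$ is bilinear, so the difference splits term by term into expressions bilinear in the increment $w(\tau):=u^\alpha(\tau)-u_0$ and in $u^\alpha(\tau),u_0$; for instance $u^\alpha\pa_xu^\alpha-u_0\pa_xu_0=u^\alpha\pa_xw+w\pa_xu_0$ and $(u^\alpha)^2-u_0^2=w(u^\alpha+u_0)$, and similarly for $(\pa_xu^\alpha)^2-(\pa_xu_0)^2=\pa_xw\,(\pa_xu^\alpha+\pa_xu_0)$. Second, I will use the elementary Fourier-multiplier bounds, valid for every $r$,
\[
\big\|\alpha^2\partial^3_x\left(1-\alpha^2\partial_x^2\right)^{-1}f\big\|_{H^r}\lesssim\min\big\{\|f\|_{H^{r+1}},\ \alpha\|f\|_{H^{r+2}}\big\},\qquad \big\|\alpha^2\partial_x\left(1-\alpha^2\partial_x^2\right)^{-1}f\big\|_{H^r}\lesssim\min\big\{\alpha^2\|f\|_{H^{r+1}},\ \alpha\|f\|_{H^{r}}\big\},
\]
which follow at once from $\tfrac{\alpha^2|\xi|^3}{1+\alpha^2\xi^2}\le\min\{|\xi|,\tfrac\alpha2|\xi|^2\}$ and $\tfrac{\alpha^2|\xi|}{1+\alpha^2\xi^2}\le\min\{\alpha^2|\xi|,\tfrac\alpha2\}$. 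These confine the two explicit nonlocal terms to the $\alpha$-weighted pieces of $\mathbf{F}$.

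The factor $\tau$ comes from estimating $w(\tau)=\int_0^\tau P(u^\alpha(\sigma))\,\dd\sigma$, which gives $\|w(\tau)\|_{H^r}\lesssim\tau\sup_{[0,T]}\|P(u^\alpha)\|_{H^r}$ for the admissible values $r=s-1,s,s+1$. To bound $\|P(u^\alpha)\|_{H^r}$ and then $\|P(u^\alpha(\tau))-P(u_0)\|_{H^s}$ I will apply the sharp form of Lemma \ref{le1}, always placing the low-regularity factor of a product in $L^\infty$ and controlling it by $H^{s-1}$ (using $s-1>\tfrac12$, so $H^{s-1}\hookrightarrow L^\infty$), while the persistence estimates \eqref{m1}--\eqref{m2} convert norms of $u^\alpha$ into the corresponding norms of $u_0$ (recall $\|u_0\|_{H^s}\approx1$). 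The representative computation is the Burgers term: by Lemma \ref{le1},
\[
\|u^\alpha\pa_xu^\alpha\|_{H^{s+1}}\lesssim\|u^\alpha\|_{H^{s+1}}\|\pa_xu^\alpha\|_{L^\infty}+\|u^\alpha\|_{L^\infty}\|\pa_xu^\alpha\|_{H^{s+1}}\lesssim\|u_0\|_{H^{s+1}}+\|u_0\|_{H^{s-1}}\|u_0\|_{H^{s+2}},
\]
which is exactly the second factor in the non-$\alpha$ part of $\mathbf{F}$; feeding this into $\|u^\alpha\pa_xw\|_{H^s}$ through $\|w\|_{H^{s+1}}$ and into $\|w\pa_xu_0\|_{H^s}$ through $\|w\|_{H^{s-1}},\|w\|_{H^{s}}$ reproduces the Burgers contribution carrying the prefactor $\alpha+\|u_0\|_{H^{s-1}}$, while the multiplier bounds applied to $w(u^\alpha+u_0)$ and $\pa_xw\,(\pa_xu^\alpha+\pa_xu_0)$ reproduce the two nonlocal contributions carrying the prefactor $\alpha\|u_0\|_{H^{s+1}}$. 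Summing the three pieces yields $\|P(u^\alpha(\tau))-P(u_0)\|_{H^s}\lesssim\tau\,\mathbf{F}(\alpha,u_0)$.

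The main obstacle is one of derivative counting rather than of any new idea: the Burgers term loses one derivative and the nonlocal terms lose up to two, so controlling the $H^s$-difference forces control of $w$ in $H^{s+1}$ and hence of $u^\alpha$ in $H^{s+2}$. This is legitimate only because $u_0$ is taken smooth and \eqref{m2} propagates these higher norms with constants uniform in $\alpha$. The delicate point is to distribute the derivatives in Lemma \ref{le1} so that each product lands on precisely the combination of $\|u_0\|_{H^{s-1}}$, $\|u_0\|_{H^{s+1}}$, $\|u_0\|_{H^{s+2}}$ and power of $\alpha$ that appears in $\mathbf{F}$, rather than on cruder bounds; in particular, keeping the low-order factor measured in $H^{s-1}$ throughout is what generates the $\|u_0\|_{H^{s-1}}$ weights, and choosing between the two multiplier bounds above is what separates the $\alpha$- from the $\alpha^2$-weighted terms. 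This careful bookkeeping, though routine in spirit, is where all the work lies.
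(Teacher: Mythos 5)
Your proposal is correct and follows essentially the same route as the paper: the fundamental-theorem-of-calculus reduction to $\int_0^t\|P(u^\alpha(\tau))-P(u_0)\|_{H^s}\,\dd\tau$, the intermediate increment bounds $\|u^\alpha(\tau)-u_0\|_{H^r}\lesssim\tau\,(\cdots)$ for $r=s-1,s,s+1$ (the paper's \eqref{u1}--\eqref{u3}), the sharp product law of Lemma \ref{le1} with $H^{s-1}\hookrightarrow L^\infty$, the elementary multiplier bounds on $\alpha^2\partial_x^3(1-\alpha^2\partial_x^2)^{-1}$ and $\alpha^2\partial_x(1-\alpha^2\partial_x^2)^{-1}$, and the $\alpha$-uniform persistence estimates \eqref{m1}--\eqref{m2}. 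The only differences are cosmetic (you split $u^\alpha\partial_xu^\alpha-u_0\partial_xu_0$ directly while the paper rewrites it via $u^2-u_0^2$ in $H^{s+1}$, and the $\partial_x^3$ nonlocal term actually lands in the $(\alpha+\|u_0\|_{H^{s-1}})$-weighted piece of $\mathbf{F}$ rather than the $\alpha\|u_0\|_{H^{s+1}}$-weighted one), and your bookkeeping does reproduce $\mathbf{F}$.
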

\begin{proof} For simplicity, we denote $u(t)=\mathbf{S}^\alpha_t(u_0)$.
Firstly, we need to estimate the different Sobolev norms of the term $u(t)-u_0$, which can be bounded by $t$ multiplying the corresponding Besov norms of initial data $u_0$.
For $t\in[0,T]$, by the fundamental theorem of calculus in the time variable and using the product estimates from Lemma \ref{le1}, we obtain from \eqref{c} that
\bal\label{u1}
\|u(t)-u_0\|_{H^s}
&\leq \int^t_0\|\pa_\tau u\|_{H^s} \dd\tau
\nonumber\\&\leq \int^t_0\f(3\|u \pa_xu\|_{H^s}+\alpha^2\f\|\partial^3_x \left(1-\alpha^2 \partial_x^2\right)^{-1}u^2\g\|_{H^s}\g)\dd\tau\nonumber\\&\quad+\int^t_0\frac{\alpha^2}{2}\f\|\partial_x \left(1-\alpha^2 \partial_x^2\right)^{-1} (\partial_xu)^2\g\|_{H^s} \dd\tau
\nonumber\\&\lesssim t\f(\f\|u \pa_xu\g\|_{L_t^\infty H^s}+\alpha\f\|(\partial_xu)^2\g\|_{L_t^\infty H^s}\g)\nonumber\\
&\lesssim t\f(\|u\|_{L_t^\infty H^{s-1}}\|u\|_{L_t^\infty H^{s+1}}+\alpha\f\|\partial_xu\g\|_{L_t^\infty H^{s-1}}\f\|\partial_xu\g\|_{L_t^\infty H^s}\g)
\nonumber\\&\lesssim t\f(\|u_0\|_{H^{s-1}}\|u_0\|_{H^{s+1}}+\alpha\|u_0\|_{H^{s+1}}\g),
\end{align}
where we have used that $H^{s-1}(\R)\hookrightarrow L^\infty(\R)$ with $s>\frac{3}{2}$.

Following the same procedure of estimates as above, we have
\bal\label{u2}
\|u(t)-u_0\|_{H^{s-1}}
&\leq \int^t_0\|\pa_\tau u\|_{H^{s-1}} \dd\tau\nonumber\\
&\leq \int^t_0\f(3\|u \pa_xu\|_{H^{s-1}}+\alpha^2\f\|\partial^3_x \left(1-\alpha^2 \partial_x^2\right)^{-1}u^2\g\|_{H^{s-1}}\g)\dd\tau\nonumber\\
&\quad+\int^t_0\frac{\alpha^2}{2}\f\|\partial_x \left(1-\alpha^2 \partial_x^2\right)^{-1} (\partial_xu)^2\g\|_{H^{s-1}}  \dd\tau\nonumber\\
&\lesssim t\f(\f\|u \pa_xu\g\|_{L_t^\infty H^{s-1}}+\alpha\f\|(\partial_xu)^2\g\|_{L_t^\infty H^{s-1}}\g)\nonumber\\
&\lesssim t\f(\|u\|_{L_t^\infty H^{s-1}}\|u\|_{L_t^\infty H^s}+\alpha\|u\|^2_{L_t^\infty H^s}\g)
\nonumber\\&\lesssim t\f(\|u_0\|_{H^{s-1}}+\alpha\g),
\end{align}
\bal\label{u3}
\|u(t)-u_0\|_{H^{s+1}}
&\leq \int^t_0\|\pa_\tau u\|_{H^{s+1}} \dd\tau
\nonumber\\&\leq \int^t_0\f(3\|u \pa_xu\|_{H^{s+1}}+\alpha^2\f\|\partial^3_x \left(1-\alpha^2 \partial_x^2\right)^{-1}u^2\g\|_{H^{s+1}}\g)\dd\tau\nonumber\\
&\quad+\int^t_0\frac{\alpha^2}{2}\f\|\partial_x \left(1-\alpha^2 \partial_x^2\right)^{-1} (\partial_xu)^2\g\|_{H^{s+1}}  \dd\tau\nonumber\\
&\lesssim t\f(\f\|u \pa_xu\g\|_{L_t^\infty H^{s+1}}+\f\|(\partial_xu)^2\g\|_{L_t^\infty H^{s}}\g)\nonumber\\
&\lesssim t\f(\|u\|_{L_t^\infty H^{s-1}}\|u\|_{L_t^\infty H^{s+2}}+\|u\|_{L_t^\infty H^s}\|u\|_{L_t^\infty H^{s+1}}\g)
\nonumber\\&\lesssim t\f(\|u_0\|_{H^{s-1}}\|u_0\|_{H^{s+2}}+\|u_0\|_{H^{s+1}}\g).
\end{align}
Next, we estimate the $H^s$-norm for the term $u(t)-u_0-t\mathbf{E}(\alpha,u_0)$  which can be bounded by $t^2$
multiplying the Sobolev norms of initial data $u_0$. For $t\in[0,T]$, by the fundamental theorem of calculus in the time variable and using the product estimates from Lemma \ref{le1} again, we obtain from \eqref{c} that
\bbal
&\|u(t)-u_0-t\mathbf{E}(\alpha,u_0)\|_{H^s}
\leq \int^t_0\|\pa_\tau u-\mathbf{E}(\alpha,u_0)\|_{H^s} \dd\tau
\\&\leq \int^t_0\f(3\f\|u\pa_xu-u_0\pa_xu_0\g\|_{H^s} +\alpha^2\f\|\partial^3_x \left(1-\alpha^2 \partial_x^2\right)^{-1}\f(u^2-u_0^2\g)\g\|_{H^{s}}\g)\dd\tau\\
&\quad+ \int^t_0\frac{\alpha^2}{2}\f\|\partial_x \left(1-\alpha^2 \partial_x^2\right)^{-1} \f((\partial_xu)^2-(\partial_xu_0)^2\g)\g\|_{H^s} \dd\tau\\
&\les \int^t_0\f(\f\|u^2-u_0^2\g\|_{H^{s+1}} +\alpha\f\|(\partial_xu)^2-(\partial_xu_0)^2\g\|_{H^s}\g) \dd\tau
\\&\les \int^t_0\f(\|u(\tau)-u_0\|_{H^{s-1}} \|u_0\|_{H^{s+1}}+\|u_0\|_{H^{s-1}}\|u(\tau)-u_0\|_{H^{s+1}}\g)\dd \tau\\
&\quad+ \int^t_0\alpha\f(\|u_0\|_{H^{s+1}}\|u(\tau)-u_0\|_{H^s}+\|u(\tau)-u_0\|_{H^{s+1}}\g)\dd \tau
\\&\les t^2\mathbf{F}(\alpha,u_0),
\end{align*}
where we have used \eqref{u1}-\eqref{u3} in the last step.
Thus, we complete the proof of Proposition \ref{pro1}.
\end{proof}
Before proving Theorem \ref{th2}, we need to construct the initial data $u_0$. Firstly, we introduce smooth, radial cut-off functions to localize the frequency region. Precisely,
let $\widehat{\phi}\in \mathcal{C}^\infty_0(\mathbb{R})$ be an even, real-valued and non-negative function on $\R$ and satisfy
\begin{numcases}{\widehat{\phi}(\xi)=}
1,&if $|\xi|\leq \frac{1}{4}$,\nonumber\\
0,&if $|\xi|\geq \frac{1}{2}$.\nonumber
\end{numcases}
Motivated by \cite{Lyz}, we establish the following crucial lemmas which will be used later on.
\begin{lemma}\label{le2} Let $s\in\R$.
Define the high frequency function $f_n$ and the low frequency function $g_n$ by
\bbal
&f_n=2^{-ns}\phi(x)\sin \f(\frac{17}{12}2^nx\g),\quad n\gg1,\\
&g_n=2^{-n}\phi(x).
\end{align*}
Then for any $\sigma\in\R$, there exists a positive constant $C=C(\phi)$ such that
\bbal
&\|f_n\|_{L^\infty}\leq C2^{-ns},\quad\|\pa_xf_n\|_{L^\infty}\leq C2^{-n(s-1)},\\
&\|f_n\|_{H^\sigma}\approx2^{n(\sigma-s)},\quad \|g_n\|_{H^\sigma}\approx2^{-n},\\
&\liminf_{n\rightarrow \infty}\|g_n\pa_xf_n\|_{H^s}\geq C.
\end{align*}
\end{lemma}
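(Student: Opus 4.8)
The plan is to split the five claims into four elementary bounds and the single essential lower bound on $\|g_n\pa_xf_n\|_{H^s}$, which is the real content. For the $L^\infty$ estimates I would use $|\sin|,|\cos|\le1$ together with the fact that $\phi$, being the inverse Fourier transform of a $\mathcal{C}^\infty_0$ function, is Schwartz, so that $\|\phi\|_{L^\infty}$ and $\|\phi'\|_{L^\infty}$ are finite constants; differentiating $f_n$, the term in which $\pa_x$ falls on $\sin\f(\tfrac{17}{12}2^nx\g)$ carries a factor $\tfrac{17}{12}2^n$ and dominates, giving $\|\pa_xf_n\|_{L^\infty}\le C2^{-n(s-1)}$, the term where $\phi'$ is differentiated being of lower order $2^{-ns}$.

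For the $H^\sigma$ norms the decisive structural fact is that $\widehat\phi$ is supported in $|\xi|\le\frac12$. Writing $\sin\theta=\tfrac{1}{2i}\f(e^{i\theta}-e^{-i\theta}\g)$, one gets $\widehat{f_n}(\xi)=\tfrac{1}{2i}2^{-ns}\big[\widehat\phi(\xi-\tfrac{17}{12}2^n)-\widehat\phi(\xi+\tfrac{17}{12}2^n)\big]$, which is a sum of two bumps supported near $\xi\approx\pm\tfrac{17}{12}2^n$ with disjoint supports for $n\gg1$. On each support $(1+|\xi|^2)^\sigma\approx2^{2n\sigma}$, so after a change of variables $\|f_n\|_{H^\sigma}^2\approx2^{-2ns}2^{2n\sigma}\|\phi\|_{L^2}^2$, i.e. $\|f_n\|_{H^\sigma}\approx2^{n(\sigma-s)}$. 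The bound $\|g_n\|_{H^\sigma}=2^{-n}\|\phi\|_{H^\sigma}\approx2^{-n}$ is immediate, since $\|\phi\|_{H^\sigma}$ is a fixed positive constant.

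The heart of the matter is the last estimate. I would expand, via the product rule,
\bbal
g_n\pa_xf_n=\tfrac{17}{12}2^{-ns}\phi^2\cos\f(\tfrac{17}{12}2^nx\g)+2^{-n(s+1)}\phi\phi'\sin\f(\tfrac{17}{12}2^nx\g)=:h_n+r_n.
\end{align*}
The crucial point is that in $h_n$ the derivative has hit the oscillation, producing an extra $2^n$ that cancels the $2^{-n}$ coming from $g_n$; thus $h_n$ has amplitude $\sim2^{-ns}$ multiplying the fixed, \emph{non-oscillatory} profile $\phi^2$. Since $\widehat{\phi^2}$ is supported in $|\xi|\le1$, $\widehat{h_n}$ is again two well-separated bumps near $\xi\approx\pm\tfrac{17}{12}2^n$ on which $(1+|\xi|^2)^s\approx2^{2ns}$, so the $2^{-2ns}$ in $\|h_n\|_{H^s}^2$ is exactly absorbed and, by dominated convergence, $\|h_n\|_{H^s}^2$ tends to a strictly positive constant proportional to $\|\phi^2\|_{L^2}^2$.

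The remaining term is negligible: the same localization shows $\|r_n\|_{H^s}\approx2^{-n(s+1)}2^{ns}=2^{-n}\to0$, since there no derivative enhancement occurs. Then $\|g_n\pa_xf_n\|_{H^s}\ge\|h_n\|_{H^s}-\|r_n\|_{H^s}$, and passing to $\liminf$ gives the claimed lower bound with some constant $C>0$. The main obstacle is establishing that $\|h_n\|_{H^s}$ does not degenerate to zero: this requires both that the two frequency bumps do not interfere (guaranteed by their separation, which makes $\|h_n\|_{H^s}^2$ split additively) and that $\|\phi^2\|_{L^2}>0$, which holds because $\widehat\phi\ge0$ with $\widehat\phi(0)=1$ forces $\phi$, hence $\phi^2$, to be a nontrivial function.
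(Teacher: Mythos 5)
Your proof is correct and takes essentially the same route as the paper, which states this lemma without proof and defers to \cite{Lyz}: there, as in your argument and in the paper's proof of Lemma 2.3, the key point is that $\widehat{g_n\pa_xf_n}$ is concentrated near $|\xi|\approx\frac{17}{12}2^n$, so Plancherel turns the weight $(1+|\xi|^2)^{s/2}$ into a factor $2^{ns}$ that exactly cancels the amplitude of the leading term $\frac{17}{12}2^{-ns}\phi^2\cos\f(\frac{17}{12}2^nx\g)$, the commutator-type remainder being $O(2^{-n})$. Nothing to fix.
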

\begin{lemma}\label{le3} Let $f_n$ and $g_n$ be defined as Lemma \ref{le2}. Assume that $s\in\R$ and $\alpha_n=2^{-n}$. Then there exists a positive constant $C=C(\phi)$ such that
\bbal
&\liminf_{n\rightarrow \infty}\f\|\alpha^2_n\partial^2_x \left(1-\alpha^2_n \partial_x^2\right)^{-1}(g_n\pa_xf_n)\g\|_{H^s}\geq C.
\end{align*}
\end{lemma}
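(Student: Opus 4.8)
The plan is to pass to the Fourier side and exploit that the operator $\alpha_n^2\pa_x^2(1-\alpha_n^2\pa_x^2)^{-1}$ is a Fourier multiplier whose symbol is essentially a fixed nonzero constant on the (compactly supported) frequency band occupied by $g_n\pa_xf_n$. First I would record the symbol: writing $P_n:=\alpha_n^2\pa_x^2(1-\alpha_n^2\pa_x^2)^{-1}$, one has $\widehat{P_nh}(\xi)=m_n(\xi)\widehat{h}(\xi)$ with
$$
m_n(\xi)=-\frac{\alpha_n^2\xi^2}{1+\alpha_n^2\xi^2},\qquad |m_n(\xi)|=\frac{\alpha_n^2\xi^2}{1+\alpha_n^2\xi^2}.
$$

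Next I would localize the spectrum of $h_n:=g_n\pa_xf_n$. Since $\widehat{\phi}\in\mathcal{C}_0^\infty$ is supported in $|\xi|\leq\frac12$, the transform $\widehat{f_n}$ (hence $\widehat{\pa_xf_n}$) is supported where $\big||\xi|-\frac{17}{12}2^n\big|\leq\frac12$, while $\widehat{g_n}$ is supported in $|\xi|\leq\frac12$. Consequently $\widehat{h_n}=\widehat{g_n}*\widehat{\pa_xf_n}$ is supported in the annular band
$$
\Omega_n:=\Big\{\xi\in\R:\ \big||\xi|-\tfrac{17}{12}2^n\big|\leq 1\Big\}.
$$
The decisive point is that on $\Omega_n$, with $\alpha_n=2^{-n}$, the symbol stays bounded below uniformly in $n$: for $\xi\in\Omega_n$,
$$
\alpha_n^2\xi^2=2^{-2n}\xi^2\in\Big[\big(\tfrac{17}{12}-2^{-n}\big)^2,\ \big(\tfrac{17}{12}+2^{-n}\big)^2\Big]\xrightarrow[n\to\infty]{}\Big(\tfrac{17}{12}\Big)^2,
$$
so that $|m_n(\xi)|\to\frac{(17/12)^2}{1+(17/12)^2}=\frac{289}{433}$ uniformly on $\Omega_n$. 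Hence there exist $c_0>0$ and $N$ with $|m_n(\xi)|\geq c_0$ for all $\xi\in\Omega_n$ and all $n\geq N$.

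Finally I would close via Plancherel: since $\widehat{h_n}$ is supported in $\Omega_n$,
$$
\|P_nh_n\|_{H^s}^2=\int_{\Omega_n}(1+\xi^2)^s|m_n(\xi)|^2|\widehat{h_n}(\xi)|^2\,\dd\xi\geq c_0^2\|h_n\|_{H^s}^2\qquad(n\geq N),
$$
and the conclusion follows at once from the lower bound $\liminf_{n\to\infty}\|g_n\pa_xf_n\|_{H^s}\geq C$ already supplied by Lemma \ref{le2}, yielding $\liminf_{n\to\infty}\|P_nh_n\|_{H^s}\geq c_0C>0$.

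The main obstacle is the spectral localization step: one must check carefully that multiplying the high-frequency factor $\pa_xf_n$ (band-limited near $\frac{17}{12}2^n$) by the low-frequency bump $g_n$ only widens the frequency band by $O(1)$, so that $\widehat{h_n}$ remains trapped inside $\Omega_n$, where $\alpha_n^2\xi^2$ is comparable to the fixed constant $(17/12)^2$ rather than degenerating to $0$ or blowing up. The specific choice $\alpha_n=2^{-n}$ matched to the carrier frequency $\frac{17}{12}2^n$ is precisely what forces $\alpha_n^2\xi^2\approx(17/12)^2$; once this is pinned down, $P_n$ behaves like multiplication by a nonzero constant on $\mathrm{supp}\,\widehat{h_n}$ and the estimate is immediate.
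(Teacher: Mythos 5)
Your proposal is correct and follows essentially the same route as the paper: the paper likewise localizes $\mathrm{supp}\,\widehat{g_n\pa_xf_n}$ to the band $\frac{17}{12}2^n-1\leq|\xi|\leq\frac{17}{12}2^n+1$ and then invokes Plancherel to conclude $\f\|\alpha_n^2\pa_x^2\left(1-\alpha_n^2\pa_x^2\right)^{-1}(g_n\pa_xf_n)\g\|_{H^s}\approx\|g_n\pa_xf_n\|_{H^s}$, exactly because the multiplier $\frac{\alpha_n^2\xi^2}{1+\alpha_n^2\xi^2}$ is comparable to a fixed constant there. You merely make explicit what the paper leaves implicit (the convolution support argument and the limiting value $\frac{289}{433}$ of the symbol), which is a welcome level of detail but not a different method.
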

\begin{proof} By the construction of $f_n$ and $g_n$, one has (for more details see \cite{Lyz})
\bbal
\mathrm{supp}\ \widehat{g_n\pa_xf_n}\subset \f\{\xi\in\R: \ \frac{17}{12}2^n-1\leq |\xi|\leq \frac{17}{12}2^n+1\g\}.
\end{align*}
By the Plancherel's identity, we deduce that
\bbal
\f\|\alpha^2_n\partial^2_x \left(1-\alpha^2_n \partial_x^2\right)^{-1}(g_n\pa_xf_n)\g\|_{H^s}\approx\|g_n\pa_xf_n\|_{H^s}
\end{align*}
Using Lemma \ref{le2}, we complete the proof of Lemma  \ref{le3}.
\end{proof}
Now we begin to prove our main Theorem \ref{th2}.
Letting $\alpha_n=2^{-n}$.
We set the initial data  $u^n_0=f_n+g_n$.
It is easy to show that
\bbal
\|f_n\|_{H^{s+kn}}\lesssim 2^{kn}\quad\text{and}\quad \|g_n\|_{H^{s+kn}}\lesssim 2^{-n}\quad\text{for}\quad k\in\{-1,0,1,2\},
\end{align*}
which gives directly that
\bbal
\|u^n_0\|_{H^{s+kn}}\lesssim 2^{kn}.
\end{align*}
Thus
\bbal
\mathbf{F}(0,u^n_0)\lesssim  1\quad\text{and}\quad\mathbf{F}(\alpha_n,u^n_0)\lesssim  1.
\end{align*}
We decompose the solution $\mathbf{S}^{\alpha}_t(u_0)$ to \eqref{c} and the solution $\mathbf{S}^{0}_t(u_0)$ to \eqref{b} into three parts, respectively
\bbal
&\mathbf{S}^{\alpha_n}_{t}(u^n_0)=u^n_0+\underbrace{\mathbf{S}^{\alpha_n}_{t}(u^n_0)-u_0-t\mathbf{E}(\alpha_n,u^n_0)}_{=:\,\mathbf{I}_1}+t\mathbf{E}(\alpha_n,u^n_0),\\
&\mathbf{S}^{0}_{t}(u^n_0)=u^n_0+\underbrace{\mathbf{S}^{0}_{t}(u^n_0)-u_0-t\mathbf{E}(0,u^n_0)}_{=:\,\mathbf{I}_2}+t\mathbf{E}(0,u^n_0)\quad\text{and}\\
&\mathbf{E}(\alpha_n,u^n_0)-\mathbf{E}(0,u^n_0)=-2\alpha^2_n\partial^2_x \left(1-\alpha^2_n \partial_x^2\right)^{-1}(u^n_0\pa_xu^n_0)-\frac{\alpha^2_n}{2}\partial_x \left(1-\alpha^2_n \partial_x^2\right)^{-1} (\partial_xu^n_0)^2.
\end{align*}
Furthermore, notice that
$
u^n_{0}\pa_xu^n_{0}=g_n\pa_xf_n+f_n\pa_xf_n+u^n_{0}\pa_xg_n,
$
by Proposition \ref{pro1}, we deduce that
\bal\label{yyh}
&\f\|\mathbf{S}^{\alpha_n}_{t}(u^n_0)-\mathbf{S}^0_{t}(u^n_0)\g\|_{H^s}
\geq~t\f\|\mathbf{E}(\alpha_n,u^n_0)-\mathbf{E}(0,u^n_0)\g\|_{H^s}-\f\|\mathbf{I}_1\g\|_{H^s}-\f\|\mathbf{I}_2\g\|_{H^s}\nonumber\\
\geq&~ t\f(2\f\|\alpha^2_n\partial^2_x \left(1-\alpha^2_n \partial_x^2\right)^{-1}(u^n_0\pa_xu^n_0)\g\|_{H^s}
-\frac{\alpha^2_n}{2}\f\|\partial_x \left(1-\alpha^2_n \partial_x^2\right)^{-1} (\partial_xu_0)^2\g\|_{H^s}\g)-Ct^{2}\nonumber\\
\gtrsim&~ t\f(\f\|\alpha^2_n\partial^2_x \left(1-\alpha^2_n \partial_x^2\right)^{-1}(g_n\pa_xf_n)\g\|_{H^s}
-\f\|f_n\pa_xf_n\g\|_{H^s}-\f\|u^n_{0}\pa_xg_n\g\|_{H^s}-2^{-n}\f\|(\partial_xu^n_0)^2\g\|_{H^s}\g)-t^{2}.
\end{align}
Using Lemma \ref{le1} and Lemma \ref{le2}, after simple calculation, we obtain
\bbal
&\f\|f_n\pa_xf_n\g\|_{H^s}\les\|f_n\|_{L^\infty}\|f_n\|_{H^{s+1}}\les2^{-n(s-1)},\\
&\f\|u^n_{0}\pa_xg_n\g\|_{H^s}\les\|u^n_0\|_{H^s}\|g_n\|_{H^{s+1}}\les2^{-n},\\
&\f\|(\partial_xu^n_0)^2\g\|_{H^s}\les\|\partial_xu^n_0\|_{L^\infty}\|u^n_0\|_{H^{s+1}}\les2^n(2^{-n}+2^{-n(s-1)})\les1+2^{-n(s-2)}.
\end{align*}
Plugging the above estimates into \eqref{yyh} and combining Lemma \ref{le3} yields that
\bbal
\liminf_{n\rightarrow \infty}\f\|\mathbf{S}^{\alpha_n}_{t}(u^n_0)-\mathbf{S}^0_{t}(u^n_0)\g\|_{H^s}\gtrsim t\quad\text{for} \ t \ \text{small enough}.
\end{align*}
This completes the proof of Theorem \ref{th2}.

\section*{Acknowledgements}
J. Li is supported by the National Natural Science Foundation of China (12161004), Training Program for Academic and Technical Leaders of Major Disciplines in Jiangxi Province (20232BCJ23009) and Jiangxi Provincial Natural Science Foundation (20224BAB201008). Y. Yu is supported by the National Natural Science Foundation of China (12101011). W. Zhu is supported by the National Natural Science Foundation of China (12201118) and Guangdong Basic and Applied Basic Research Foundation (2021A1515111018).

\section*{Declarations}
\noindent\textbf{Data Availability} No data was used for the research described in the article.

\vspace*{1em}
\noindent\textbf{Conflict of interest}
The authors declare that they have no conflict of interest.

\addcontentsline{toc}{section}{References}


\begin{thebibliography}{99}
\linespread{0}\addtolength{\itemsep}{-1.0ex}
\bibitem{Alibaud} N. Alibaud, J. Droniou, J. Vovelle, Occurrence and non-appearance of shocks in fractal Burgers equations, J. Hyperbolic Differ. Equ., 4(3) (2007), 479-499.
\bibitem{B} H. Bahouri, J. Y. Chemin, R. Danchin, Fourier Analysis and Nonlinear Partial Differential Equations, Grundlehren der Mathematischen Wissenschaften, Springer, Heidelberg, 2011.

    \bibitem{Camassa} R. Camassa, D. Holm, An integrable shallow water equation with peaked solitons, Phys. Rev. Lett., 71 (1993), 1661-1664.

\bibitem{Constantin} A. Constantin, Existence of permanent and breaking waves for a shallow water equation: a geometric approach, Ann. Inst. Fourier, 50 (2000), 321-362.

\bibitem{Constantin-E} A. Constantin, The Hamiltonian structure of the Camassa-Holm equation, Exposition. Math., 15 (1997), 53-85.

\bibitem{Constantin-P} A. Constantin, On the scattering problem for the Camassa-Holm equation, R. Soc. Lond. Proc. Ser. A Math. Phys. Eng. Sci.,  457 (2001), 953-970.

\bibitem{Constantin-I} A. Constantin, The trajectories of particles in Stokes waves, Invent. Math., 166 (2006), 523-535.


\bibitem{Escher2} A. Constantin, J. Escher, Well-posedness, global existence, and blowup phenomena for a periodic quasi-linear hyperbolic equation, Comm. Pure Appl. Math., 51 (1998), 475-504.

\bibitem{Escher3} A. Constantin, J. Escher, Wave breaking for nonlinear nonlocal shallow water equations, Acta Math., 181 (1998), 229-243.

\bibitem{Escher4} A. Constantin, J. Escher, Particle trajectories in solitary water waves, Bull. Amer. Math. Soc., 44 (2007), 423-431.

\bibitem{Escher5} A. Constantin, J. Escher, Analyticity of periodic traveling free surface water waves with vorticity, Ann. Math., 173 (2011), 559-568.

\bibitem{Constantin.Strauss}  A. Constantin, W. A. Strauss, Stability of peakons, Comm. Pure Appl. Math., 53 (2000), 603-610.

\bibitem{Dong} H. Dong, D. Du, D. Li, Finite time singularities and global well-posedness for fractal Burgers equation, Indiana Univ. Math. J., 58 (2009).
\bibitem{Fokas} A. Fokas, B. Fuchssteiner, Symplectic structures, their B\"{a}cklund transformation and hereditary symmetries, Phys. D, 4 (1981/82), 47-66.
\bibitem{GL} G. Gui, Y. Liu, Global well-posedness and blow-up of solutions for the
Camassa-Holm equations with fractional dissipation, Mathematische Zeitschrift, 281 (2015), 993-1020.
\bibitem{Karch} G. Karch, C. Miao, X. Xu, On convergence of solutions of fractal Burgers equation to wardrare faction waves, SIAM J. Math. Anal., 39 (2007), 1536-1549.

\bibitem{Lyz} J. Li, Y. Yu, W. Zhu, Non-uniform dependence on initial data for the Camassa--Holm equation in Besov spaces, J. Differ. Equ., 269 (2020), 8686-8700.
\bibitem{aml} J. Li, Y. Yu, W. Zhu, Zero-filter limit for the Camassa–Holm equation in Sobolev spaces, Appl. Math. Lett., 145 (2023), 108727.
\bibitem{Linares} F. Linares, D. Pilod, J.-C. Saut, Dispersive perturbations of Burgers and hyperbolic equations I: local theory, SIAM J. Math. Anal., 46 (2014), 1505-1537.
\bibitem{miao2009} C. Miao, G. Wu, Global well-posedness of the critical Burgers equation in critical Besov spaces, J. Differ. Equ., 247 (2009), 1673-1693.
\bibitem{Molinet} L. Molinet, D. Pilodb, S. Vento, On well-posedness for some dispersive perturbations of Burgers'equation, Ann. I. H. Poincar\'{e}-AN, 35 (2018), 1719-1756.
\end{thebibliography}
\end{document}